\newtheorem{theorem}{Theorem}[section]
\newtheorem{lemma}[theorem]{Lemma}
\newtheorem{proposition}[theorem]{Proposition}
\newtheorem{corollary}[theorem]{Corollary}
\theoremstyle{definition}
\newtheorem{definition}[theorem]{Definition}
\numberwithin{equation}{section}
\def\llll{\longrightarrow}
\def\supp{{\rm supp \;}}
\def\sep{{ \ \  }}
\def\sem{{\ \ \ \  }}
\def\seg{{\ \ \ \  \ \  }}
\title[Bishop-Phelps-Bollob{\'a}s  property] {Bishop-Phelps-Bollob{\'a}s  property  for positive \\ operators  when the domain is $C_0(L) $}
\author[M.D. Acosta]{Mar\'{\i}a D. Acosta}
\address{University of Granada, Faculty of Sciences,
	Department of Mathematical Analysis and Institute of Mathematics (IEMath-GR), 18071 Granada, Spain}
\email{dacosta@ugr.es}
\author[M. Soleimani]{Maryam Soleimani-Mourchehkhorti}
\address{School of Mathematics, Institute for Research in Fundamental Sciences (IPM), P.O. Box: 19395-5746, Tehran, Iran}
\email{m-soleimani85@ipm.ir}
\thanks{The  first  author was  supported  by Junta de Andaluc\'{\i}a grant  FQM--185,  by Spanish MINECO/FEDER grant PGC2018-093794-B-I00 	
	and also by     Junta de Andaluc\'{\i}a grant  A-FQM-484-UGR18. The second author was   supported by a grant from IPM}
\keywords{Banach space,  operator, Bishop-Phelps-Bollob{\'a}s  theorem, Bishop-Phelps-Bollob{\'a}s  \linebreak[4] pro\-per\-ty for positive operators.}
\begin{document}

\subjclass[2010]{Primary 46B20; Secondary 46B42.}

\begin{abstract}
	Recently it was introduced the  so-called  Bishop-Phelps-Bollob{\'a}s property  for positive operators between Banach lattices.
	In this paper we prove that the pair
	$(C_0(L), Y) $ has the Bishop-Phelps--Bollob{\'a}s property for positive operators, for any locally compact Hausdorff topological space $L$, whenever  $Y$ is a uniformly monotone Banach lattice with a weak unit.
	In case that the space $C_0(L)$ is separable, the same statement holds for any uniformly monotone Banach lattice $Y .$   We also show the following partial converse of the main result. In case that $Y$ is a strictly monotone Banach lattice,  $L$ is a locally compact Hausdorff topological space that contains at least two elements and the pair $(C_0(L), Y )$ has the  Bishop-Phelps--Bollob{\'a}s property for positive operators then $Y$ is uniformly monotone.
	\end{abstract}

\maketitle

\section{Introduction}

	Bishop--Phelps theorem \cite{BP} states that every continuous linear functional on a Banach space can be approximated (in norm) by norm attaining functionals. Bollob\'{a}s proved a “quantitative version” of that result \cite{Bol}. In order to state such result, we denote by $B_X ,$  $S_X$ and $X^*$ the closed unit ball, the unit sphere and the topological dual of a Banach space $X$, respectively. If $X$ and $Y$ are both real or both complex Banach spaces, $L(X,Y )$ denotes the space of (bounded linear) operators from $X$ to  $Y,$  endowed with its usual operator norm.

\vspace{5mm}

\textit{	Bishop--Phelps--Bollob\'{a}s theorem } (see \cite[Theorem 16.1]{BoDu}, or \cite[ Corollary 2.4]{CKMMR}) Let $X$ be a Banach space and $0 <\varepsilon < 1 .$ Given $x \in B_X$ and $x^* \in B_{X^*}$ with $\vert 1 - x^* (x) \vert < \frac{\varepsilon ^2}{4} ,$ there are elements  $y \in B_X$ and $y^* \in B_{X^*}$	such that $y^*(y) =1 ,$ $\Vert y - x \Vert < \varepsilon$ and $\Vert y^* - x^* \Vert < \varepsilon .$

\vspace{5mm}

In 2008 the study of extensions of the Bishop--Phelps--Bollob\'{a}s theorem for operators was initiated by Acosta, Aron, García and Maestre \cite{AAGM}.   The survey \cite{Acbc} contains many results on that topic.

\begin{definition}
	\label{def-BPBP}
	(\cite[ Definition 1.1]{AAGM}). Let $X$ and $Y$ be either real or complex
	Banach spaces. The pair $(X, Y )$ is said to have the Bishop--Phelps--Bollob\'{a}s property for operators if for every $0 <\varepsilon < 1$ there exists $ 0 <\eta (\varepsilon) <\varepsilon $ such that for every $S \in S_{L(X,Y )},$ if $x_0 \in S_X$ satisfies $\Vert S(x_0)\Vert > 1 - \eta (\varepsilon) ,$  then there exist an element $u_0 \in S_X$ and an operator  $T \in S_{L(X,Y )}$  satisfying the following conditions
	$$
	\Vert T(u_0) \Vert =1, ~~~~ \Vert u_0 -x_0\Vert < \varepsilon ~~~~ , \Vert T-S \Vert < \varepsilon .
	$$
	\end{definition}

Recently in \cite{ASposVM}, the authors  introduced a version of  Bishop--Phelps--Bollob\'{a}s property for positive operators between two Banach lattices. Let us mention that the only difference between this property and the previous one is that in the new property the operators appearing in Definition \ref{def-BPBP} are positive. In the same paper it is shown that the pairs $(c_0,L_1(\nu))$ and $(L_\infty (\mu),L_1(\nu))$ have the  Bishop-Phelps-Bollob\'{a}s property for positive operators for any positive measures $\mu$  and $\nu$ (see \cite[Theorems 1.7 and 1.6]{ASposVM}).
The paper \cite{ASposBull}  contains some extensions of those results. More precisely, it is proved that 
the pair $(c_0,Y )$ has the Bishop-Phelps-Bollob\'{a}s property for positive operators whenever $Y$ is a uniformly monotone Banach lattice (see \cite[Corollary 3.3]{ASposBull}). It is  also shown  that the pair $(L_\infty (\mu),Y)$ has the Bishop-Phelps-Bollob\'{a}s property for positive operators for any positive measure $\mu$ if $Y$ is a uniformly monotone Banach lattice with a weak unit (see \cite[Corollary 2.6]{ASposBull} ).

The goal of this paper is to obtain a far reaching extension of those results. To be precise, we prove that the pair
$(C_0(L), Y) $ has the Bishop-Phelps--Bollob{\'a}s property for positive operators, for any locally compact Hausdorff topological space $L$, whenever  $Y$ is a uniformly monotone Banach lattice with a weak unit. If  $C_0(L)$ is separable, the same statement holds for any uniformly monotone Banach lattice $Y .$ Further we show that these results are optimal in case that $Y$ is strictly monotone.   That is, if  $Y$ is a strictly monotone Banach lattice and $L$ is a locally compact Hausdorff topological space that contains at least two elements, if the pair $(C_0(L), Y )$ has the  Bishop-Phelps-Bollob{\'a}s property for positive operators then $Y$ is uniformly monotone.

\section{The results}

\begin{definition}
	 [{\cite[Definition 1.3]{ASposVM}}]
	\label{def-BPBp-pos}
	Let $X$  and $Y$ be    Banach lattices. The pair $(X,Y )$ is said to have the {\it Bishop-Phelps-Bollob{\'a}s property for  positive operators}   if for every $  0 < \varepsilon  < 1 $  there exists $ 0< \eta (\varepsilon) < \varepsilon $ such that for every $S\in S_{L(X,Y)}$, such that $S \ge 0$,  if $x_0 \in S_X$ satisfies
	$ \Vert S (x_0) \Vert > 1 - \eta (\varepsilon)$, then
	there exist an element $u_0 \in S_X$  and a positive  operator $T \in S_{L(X,Y )}$ satisfying the following conditions
	$$
	\Vert T (u_0) \Vert =1, \sem \Vert u_0- x_0 \Vert < \varepsilon \seg \text{and}
	\sem \Vert T-S \Vert < \varepsilon.
	$$
\end{definition}

	In order to refine the result, we recall  the following  version of \cite[Definition 1.3]{ABG-jmaa-2014},  which was introduced in \cite{ASposVM}.

\begin{definition} 
	[{\cite[Definition 1.3]{ASposVM}}]  
	\label{def-BPBp-pos-subs}
	Let $X$  and $Y$ be    Banach lattices and $M $ a subspace of $L(X,Y).$  The subspace $M$  is said to have the {\it Bishop-Phelps-Bollob{\'a}s property for  positive operators}   if for every $  0 < \varepsilon  < 1 $  there exists $ 0< \eta (\varepsilon) < \varepsilon $ such that for every $S\in S_M$, such that $S \ge 0$,  if $x_0 \in S_X$ satisfies
	$ \Vert S (x_0) \Vert > 1 - \eta (\varepsilon)$, then
	there exist an element $u_0 \in S_X$  and a positive  operator $T \in S_M$ satisfying the following conditions
	$$
	\Vert T (u_0) \Vert =1, \sem \Vert u_0- x_0 \Vert < \varepsilon \seg \text{and}
	\sem \Vert T-S \Vert < \varepsilon.
	$$
\end{definition}

We will use the notions of  strictly monotone and  uniformly monotone Banach lattice, that we recall now.

\begin{definition}
	\label{def-SM-UM}
	Let $X$ be a real Banach lattice.   $X$ is \textit{strictly monotone} if for any $x,y \in X^+$ such that $x \le y$ and $x \ne y,$ it is satisfied that $\Vert x \Vert < \Vert y \Vert.$  The Banach 
lattice $X$ is \textit{uniformly monotone},
	if for every $0<\varepsilon<1,$ there is 
	$0  <  \delta \leq \varepsilon$ satisfying the following property
	$$
	x , y  \in X^+ ,\Vert x \Vert =1   , \sep  \Vert x+ y\Vert  \leq 1 + \delta  \Rightarrow\ \Vert y\Vert \leq \varepsilon .
	$$
\end{definition} 

The following characterization of  uniform monotonicity can be found in \cite[Proposition 4.2]{ASposBull}.

\begin{proposition}
	\label{pro-char-UM}
	Let $Y$ be a Banach lattice. The following conditions are equivalent.
	
	\begin{enumerate}
		\item[1)]  $Y$ is uniformly monotone.
		
		\item[2)]  For  every $0< \varepsilon <1$, there is $ \eta (\varepsilon) > 0$ satisfying 
		$$
		u \in Y, v \in S_Y,  \sem 0 \le u\le v \sem \text{and} \sem  \|v - u\|  >   1 - \eta (\varepsilon) \sep \Rightarrow \sep \| u\| \leq  \varepsilon .
		$$
		
		\item[3)]  For  every $0< \varepsilon <1$, there is $ \eta (\varepsilon) > 0$ satisfying 
		$$
		u, v \in Y,  \sem 0 \le u\le v \sem \text{and} \sem  \|v - u\|  >  ( 1 - \eta (\varepsilon) ) \|v\| \sep \Rightarrow \sep \| u\| \leq  \varepsilon  \|v\|.
		$$
		
	\end{enumerate}
	
\end{proposition}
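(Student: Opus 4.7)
The plan is to prove the cyclic chain of implications $(1)\Rightarrow(2)\Rightarrow(3)\Rightarrow(1)$, with $(2)\Rightarrow(3)$ being essentially a homogeneity argument and the other two requiring short substitution tricks.

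For $(1)\Rightarrow(2)$, given $0<\varepsilon<1$, apply uniform monotonicity to obtain $0<\delta\le\varepsilon$, and declare $\eta(\varepsilon)=\delta/(1+\delta)$. Now suppose $v\in S_Y$, $0\le u\le v$, and $\|v-u\|>1-\eta(\varepsilon)$. Set $t=\|v-u\|\in(1-\eta,1]$, and consider the normalized pair
$$\tilde{x}=\frac{v-u}{t},\qquad \tilde{y}=\frac{u}{t},$$
both in $Y^+$ with $\|\tilde{x}\|=1$. Since $\tilde{x}+\tilde{y}=v/t$, we have $\|\tilde{x}+\tilde{y}\|=1/t<1/(1-\eta)=1+\delta$. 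Applying $(1)$ yields $\|\tilde{y}\|\le\varepsilon$, that is, $\|u\|\le \varepsilon t\le\varepsilon$, as required.

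The implication $(2)\Rightarrow(3)$ is just scaling: if $0\le u\le v$ in $Y$ with $v\ne 0$ and $\|v-u\|>(1-\eta(\varepsilon))\|v\|$, then applying $(2)$ to $u/\|v\|$ and $v/\|v\|\in S_Y$ immediately gives $\|u\|\le\varepsilon\|v\|$; the case $v=0$ is trivial.

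For $(3)\Rightarrow(1)$, given $0<\varepsilon<1$, apply $(3)$ with $\varepsilon/2$ in place of $\varepsilon$ to obtain $\eta>0$, and set $\delta=\min\{\varepsilon,\eta/2\}$. Take $x,y\in Y^+$ with $\|x\|=1$ and $\|x+y\|\le 1+\delta$, and put $v=x+y$, $u=y$. Then $0\le u\le v$, $\|v-u\|=\|x\|=1$, and a direct check shows
$$(1-\eta)\|v\|\le(1-\eta)(1+\delta)\le(1-\eta)(1+\eta/2)=1-\eta/2-\eta^2/2<1=\|v-u\|,$$
so $(3)$ gives $\|u\|\le(\varepsilon/2)\|v\|\le(\varepsilon/2)(1+\delta)\le\varepsilon$, which is $(1)$.

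I do not anticipate a real obstacle here; the whole statement is a packaging lemma, and the only mild care needed is to choose $\delta$ and $\eta$ so that the normalized or rescaled vectors land in the hypotheses of the implication being invoked, while the loss of a factor $(1+\delta)$ in $(3)\Rightarrow(1)$ is absorbed by applying $(3)$ with $\varepsilon/2$ instead of $\varepsilon$.
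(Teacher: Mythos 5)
Your proof is correct: all three implications check out, the normalization in $(1)\Rightarrow(2)$ lands exactly in the hypotheses of the definition since $1/(1-\eta)=1+\delta$ for $\eta=\delta/(1+\delta)$, the scaling step $(2)\Rightarrow(3)$ is immediate, and the arithmetic in $(3)\Rightarrow(1)$ is right (with the harmless implicit assumption $\eta\le 1$, which one may always impose by shrinking $\eta$). Note that the paper itself gives no proof of this proposition --- it is quoted from an earlier reference (Proposition 4.2 of the cited Bulletin paper) --- so there is no in-paper argument to compare against; your argument is the standard one that such a statement admits, and it is complete as written.
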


\begin{lemma}[{\cite[Lemma 2.4]{ASposBull}}]
	\label{le-dis-supp}
	Let  $Y$ be  a uniformly monotone Banach  function space and $0 < \varepsilon < 1$. Assume that $f_1 $ and $f_2$ are positive elements in $Y$ such that
	$$
	\Vert f_1 + f_2 \Vert \le 1 \sem \text{\rm and} \sem  \frac{1}{ 1 + \delta ( \frac{\varepsilon}{3})}   \le 	\Vert f_1 - f_2 \Vert  ,
	$$
	where $\delta $ is the function satisfying  the definition of uniform monotonicity for $Y.$ 	Then there are two  positive functions   $h_1$ and $h_2$  in $Y$   with disjoint supports  satisfying that
	$$
	\Vert h_1 + h_2 \Vert  =1 	 \sem \text{\rm and} \sem \Vert h_i- f_i \Vert  <  \varepsilon \sep \text{for}\sep i=1,2  .
	$$
\end{lemma}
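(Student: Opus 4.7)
The plan is to take the normalised positive and negative parts of $f_1-f_2$ as the disjoint-support candidates. Setting $c:=\Vert f_1-f_2\Vert$ and $g:=f_1\wedge f_2$, so that $f_1 = (f_1-f_2)^{+} + g$ and $f_2 = (f_1-f_2)^{-} + g$, I would define
$$
h_1 := \frac{(f_1-f_2)^{+}}{c}, \qquad h_2 := \frac{(f_1-f_2)^{-}}{c}.
$$
These are positive, have disjoint supports, and $h_1+h_2 = |f_1-f_2|/c$ has norm one, so the first two conclusions are immediate.

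For the approximation condition, the algebraic identity $f_i - h_i = (f_i-g)(1 - 1/c) + g$ together with $\Vert (f_1-f_2)^{\pm}\Vert \le c$ and the triangle inequality reduces matters to controlling $1-c$ and $\Vert g\Vert$. The first is bounded directly by $\delta(\varepsilon/3)/(1+\delta(\varepsilon/3)) \le \varepsilon/3$, using the hypothesis and the fact that $\delta(\varepsilon/3) \le \varepsilon/3$.

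The substantive step is to bound $\Vert g\Vert$, for which I would exploit the identity $(f_1+f_2) - 2g = |f_1-f_2|$. Setting $v := f_1+f_2$ and $u := 2g$ gives $0 \le u \le v$ with $\Vert v\Vert \le 1$ and $\Vert v-u\Vert = c \ge 1/(1+\delta(\varepsilon/3))$. Normalising $x := (v-u)/c$ and $y := u/c$, one has $\Vert x\Vert = 1$ and $\Vert x+y\Vert = \Vert v\Vert/c \le 1+\delta(\varepsilon/3)$, so the definition of uniform monotonicity applied with parameter $\varepsilon/3$ yields $\Vert y\Vert \le \varepsilon/3$; hence $\Vert g\Vert \le \varepsilon/6$ and finally $\Vert f_i - h_i\Vert \le \varepsilon/3 + \varepsilon/6 < \varepsilon$.

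The only step that I expect to require genuine insight is recognising that the hypothesis ``$\Vert f_1-f_2\Vert$ nearly as large as $\Vert f_1+f_2\Vert$'' forces the lattice infimum $f_1\wedge f_2$ to be small; once this translation is made, uniform monotonicity is precisely the tool that turns the heuristic into a quantitative bound, and the rest is routine bookkeeping with the triangle inequality.
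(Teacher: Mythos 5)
Your argument is correct: the decomposition $f_i=(f_1-f_2)^{\pm}+f_1\wedge f_2$, the identity $(f_1+f_2)-2(f_1\wedge f_2)=|f_1-f_2|$, and the application of uniform monotonicity to the normalised pair $x=|f_1-f_2|/c$, $y=2(f_1\wedge f_2)/c$ all check out, and the final estimate $\Vert f_i-h_i\Vert\le (1-c)+\Vert f_1\wedge f_2\Vert\le \varepsilon/3+\varepsilon/6<\varepsilon$ is valid (using $\delta(\varepsilon/3)\le\varepsilon/3$ from Definition \ref{def-SM-UM}). The paper itself imports this lemma from \cite{ASposBull} without reproducing the proof, but your route --- controlling $f_1\wedge f_2$ via uniform monotonicity and normalising the positive and negative parts of $f_1-f_2$ --- is essentially the argument of the cited source, so nothing further is needed.
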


\begin{proposition}
	\label{comp}
	Let  $L$ be a  locally compact Hausdorff space and  $Y$  be  a Banach lattice.  Assume that   $S_1$ and $S_2$ are  positive operators in $L(C_0(L), Y)$ and $g_1$ and $g_2$ are  positive elements in $C_0(L)$.  Then the operator  $U: C_0(L) \llll Y$  defined by 
	$$
	U(f)= S_1 (f g_1)  +  S_2  (f g_2)  \seg (f \in C_0(L))
	$$
	satisfies  $\Vert U \Vert = \Vert S_1 ( g_1)  +  S_2  ( g_2) \Vert$.
\end{proposition}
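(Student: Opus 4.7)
The plan is to establish the two inequalities $\Vert U\Vert \le \Vert S_1(g_1)+S_2(g_2)\Vert$ and $\Vert U\Vert \ge \Vert S_1(g_1)+S_2(g_2)\Vert$ separately. First note that $U$ is itself a positive operator, being a sum of compositions of positive operators: multiplication by the positive function $g_i$ preserves $C_0(L)^+$, and $S_1,S_2$ are positive by hypothesis.

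For the upper bound I would take any $f \in B_{C_0(L)}$. Since $|f|\le 1$ pointwise and $g_i \ge 0$, we have $|fg_i| = |f|\, g_i \le g_i$ in $C_0(L)^+$. The standard inequality $|Tx|\le T|x|$, valid for any positive operator $T$ between Banach lattices (immediate from $x = x^+ - x^-$), yields $|S_i(fg_i)|\le S_i(g_i)$, and hence $|U(f)| \le S_1(g_1)+S_2(g_2)$ in $Y$. Monotonicity of the lattice norm on $Y$ gives $\Vert U(f)\Vert \le \Vert S_1(g_1)+S_2(g_2)\Vert$, and taking the supremum over $f \in B_{C_0(L)}$ closes this direction.

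For the reverse inequality I would approximate the (absent) constant function $\1$ by a Urysohn cut-off. Fix $\varepsilon>0$; since $g_1+g_2 \in C_0(L)$, the set $K_\varepsilon := \{t \in L : g_1(t)+g_2(t) \ge \varepsilon\}$ is compact. Urysohn's lemma for locally compact Hausdorff spaces then supplies $f \in C_0(L)$ with $0\le f \le 1$ and $f\equiv 1$ on $K_\varepsilon$. The difference $0 \le g_i - fg_i = (1-f)g_i$ vanishes on $K_\varepsilon$ and off $K_\varepsilon$ is bounded by $g_1+g_2 \le \varepsilon$, so $\Vert g_i - fg_i\Vert_\infty \le \varepsilon$. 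Boundedness of $S_i$ then gives $\Vert U(f) - (S_1(g_1)+S_2(g_2))\Vert \le \varepsilon(\Vert S_1\Vert+\Vert S_2\Vert)$. Since $\Vert f\Vert_\infty \le 1$, this forces $\Vert U\Vert \ge \Vert S_1(g_1)+S_2(g_2)\Vert - \varepsilon(\Vert S_1\Vert+\Vert S_2\Vert)$, and letting $\varepsilon \to 0$ finishes the proof.

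The main obstacle is conceptually minor: the statement is essentially the fact that a positive operator on $C_0(L)$ attains its norm "at $\1$", but $\1 \notin C_0(L)$ when $L$ is non-compact, so the proof must replace $\1$ by a cut-off inside $C_0(L)$. The compactness of $K_\varepsilon$, which comes from $g_1+g_2 \in C_0(L)$, is exactly what makes the Urysohn construction of $f$ available; without the $C_0$ hypothesis one could only arrange $f \in C_b(L)$, which would be inadequate for the norm computation.
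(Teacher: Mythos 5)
Your proof is correct and follows essentially the same route as the paper: the upper bound via positivity and monotonicity of the lattice norm, and the lower bound via a Urysohn cut-off equal to $1$ on a compact superlevel set (you use $\{g_1+g_2\ge\varepsilon\}$ where the paper uses $\{g_1\ge\varepsilon\}\cup\{g_2\ge\varepsilon\}$, an immaterial difference).
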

\begin{proof}
		If $f \in B_{C_0(L)}$, we have 
	$$ 
	0 \le   \vert f \vert g_i \le  g_i, \sep i=1, 2.
	$$
	Since  	$S_i$ is positive for $i=1,2$,  we  have that 
	$$
	0 \le U( \vert f \vert) \le S_1 (g_1) + S_2 (g_2),
	$$ 
	so	
	$$
	 \Vert U(f)  \Vert \le  \Vert U(\vert f \vert)  \Vert  \le   \Vert S_1 (g_1) + S_2 (g_2)  \Vert.
	  $$
	As a consequence, 
	$$ 
	\Vert U  \Vert   \le   \Vert S_1 (g_1) + S_2 (g_2)  \Vert. 
	$$
	
	Given $\varepsilon > 0$, the subset $K$  given by
	$$
	K=\{ t \in L : g_1 (t) \ge \varepsilon \} \cup  \{ t \in L : g_2 (t) \ge \varepsilon \}
	$$
	is compact.  By Urysohn lemma  there is  a function $g \in  C_0(L)$ such that $0 \le g \le 1$ and such that $g(K)=\{1\}.$ So we have that
	$$
	\Vert g_i - g g_i \Vert _\infty \le \varepsilon, \sem  i=1, 2.
	$$
			Therefore 
		\begin{align*}
		\Vert U  \Vert  &  \ge   \Vert U(g)  \Vert   \\
		&=  \Vert S_1 (gg_1) + S_2 (gg_2)  \Vert   \\
		&\ge  \Vert S_1 (g_1) + S_2 (g_2)  \Vert  -  \varepsilon ( \Vert S_1 \Vert +  \Vert S_2 \Vert).  \\
		\end{align*}
By taking limit as $\varepsilon \to 0$ we obtain that $ \Vert U  \Vert \ge   \Vert S_1 (g_1) + S_2 (g_2)  \Vert $ and the  proof is finished.
\end{proof}

\begin{lemma}
\label{appro}	
Let $L$ be a  locally compact Hausdorff space and $Y$ be  a uniformly monotone Banach  lattice.  Let  $\delta $ be  the function satisfying  the definition of uniform monotonicity for the Banach lattice  $Y$ and    $ 0 < \eta < 1.$
Assume   that $f_0 \in S_{C_0(L)} $ and $S\in  S_{ L( C_0(L), Y)}	,$  $S \ge 0$ and  
$$
\Vert S(f_0) \Vert > \frac{1}{1+ \delta  (\eta ^2)} .
$$
Define  the sets $A_1, A_2, B_1 $ and $B_2$  by
$$
A_1=\Bigl\{ t \in L : -1 \le f_0(t) < -1 + \frac{\eta}{2} \Bigr \}, 
\sep  A_2=\Bigl\{ t \in L : -1 + \frac{\eta}{2} \le f_0(t) < -1 + \eta \Bigr\},
$$
$$
 B_1 =\Bigl\{ t \in L :  1- \frac{\eta}{2} <  f_0(t) \le 1 \Bigr\},
   \sep  B_2 =\Bigl\{ t \in L :  1- \eta <  f_0(t) \le 1- \frac{\eta}{2}\Bigr\}.
$$
There are   positive functions  $g_1$ and  $g_2 $ in $B_{C_0(L)}$  satisfying the following assertions
\begin{enumerate}
\item[a)] $   {g_1}_{\vert A_1} = 1$ and $ {g_1} _{\vert L \backslash (A_1 \cup A_2)} = 0.$
\item[b)] $   {g_2}_{\vert B_1} = 1$ and $ {g_2} _{\vert L \backslash (B_1 \cup B_2)} = 0.$
\item[c)]  $ \Vert g_1  +  f_0 g_1 \Vert _\infty \le \eta  $ and  so $\Vert   S(g_1) +S (  f_0 g_1 ) \Vert \le \eta .$
\item[d)]  $\Vert f_0 g_2 - g_2 \Vert _\infty \le \eta ,$ so $\Vert  S (  f_0 g_2) - S(g_2) \Vert \le \ \eta.$
\item[e)]   $\Vert S ( h) \Vert \le 2 \eta$ for every element  $h\in B_{C_0(L)}$ such that  $ h_{ \vert  A_1 \cup B_1}= 0 .$ As a consequence 
	\linebreak[4]
	$ \Vert S(fg_1 + fg_2-f) \Vert \le  2 \eta \Vert f \Vert _{\infty}  $  for every $f \in C_0(L).$ 
\end{enumerate}
\end{lemma}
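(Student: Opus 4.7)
The plan is to construct $g_1$ and $g_2$ by Urysohn's lemma for locally compact Hausdorff spaces, verify (a)--(d) by direct pointwise estimates, and prove (e) by reducing it to the defining inequality of uniform monotonicity.

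For the construction, set $K_1 := \{t \in L : f_0(t) \leq -1 + \tfrac{\eta}{2}\}$. Since $f_0 \in C_0(L)$ and $-1 + \tfrac{\eta}{2} < 0$, the set $K_1$ is compact, and it is contained in the open set $U_1 := \{t \in L : f_0(t) < -1 + \eta\} = A_1 \cup A_2$. Urysohn's lemma furnishes $g_1 \in C_0(L)$ with $0 \leq g_1 \leq 1$, $g_1 = 1$ on $K_1 \supseteq A_1$, and $\supp g_1 \subseteq U_1$, giving (a). The function $g_2$ is built symmetrically from $\{f_0 \geq 1 - \tfrac{\eta}{2}\}$ and $\{f_0 > 1 - \eta\}$, giving (b). Since $\eta < 1$, the supports of $g_1$ and $g_2$ lie in $\{f_0 < 0\}$ and $\{f_0 > 0\}$ respectively, hence are disjoint, and $0 \leq g_1 + g_2 \leq 1$. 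Properties (c) and (d) follow from the pointwise bounds $|1 + f_0| < \eta$ on $\supp g_1$ and $|f_0 - 1| < \eta$ on $\supp g_2$ together with $\Vert S \Vert = 1$.

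The core of the proof is (e). The key pointwise observation is that $|f_0(t)| \leq 1 - \tfrac{\eta}{2}$ for every $t \in L \setminus (A_1 \cup B_1)$ (since on $A_1$, $f_0 \in [-1, -1+\tfrac{\eta}{2})$ and on $B_1$, $f_0 \in (1 - \tfrac{\eta}{2}, 1]$, while $\Vert f_0 \Vert_\infty = 1$). Therefore, for any $h \in B_{C_0(L)}$ vanishing on $A_1 \cup B_1$, the pointwise inequality
\[
|f_0(t)| + \tfrac{\eta}{2}\, |h(t)| \,\leq\, 1 \seg (t \in L)
\]
holds (trivially on $A_1 \cup B_1$, where $h = 0$, and by the two bounds on the complement). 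Since $S \geq 0$ and $\Vert S \Vert = 1$, applying $S$ to the positive function $|f_0| + \tfrac{\eta}{2}|h| \in C_0(L)$ yields $\Vert S(|f_0|) + \tfrac{\eta}{2}\, S(|h|) \Vert \leq 1$. Using $|S(f_0)| \leq S(|f_0|)$ (positivity of $S$) we also obtain $\Vert S(|f_0|) \Vert \geq \Vert S(f_0) \Vert > 1/(1 + \delta(\eta^2))$. Set $x := S(|f_0|)/\Vert S(|f_0|) \Vert$ and $y := \tfrac{\eta}{2}\, S(|h|)/\Vert S(|f_0|) \Vert$; then $x, y \geq 0$, $\Vert x \Vert = 1$, and $\Vert x + y \Vert \leq 1/\Vert S(|f_0|) \Vert < 1 + \delta(\eta^2)$. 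The definition of uniform monotonicity applied with parameter $\eta^2$ forces $\Vert y \Vert \leq \eta^2$; hence $\Vert S(|h|) \Vert \leq 2\eta\, \Vert S(|f_0|) \Vert \leq 2\eta$ and $\Vert S(h) \Vert \leq \Vert S(|h|) \Vert \leq 2\eta$.

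The consequence follows by applying the bound just proved to $h := f(g_1 + g_2 - 1)/\Vert f \Vert_\infty$: since $g_1 + g_2 \in [0,1]$ one has $\Vert h \Vert_\infty \leq 1$, and disjointness of the supports of $g_1, g_2$ combined with (a), (b) gives $g_1 + g_2 = 1$ on $A_1 \cup B_1$, so $h \equiv 0$ there. The main obstacle is to spot the pointwise inequality $|f_0| + \tfrac{\eta}{2}|h| \leq 1$ and to align its scaling with the uniform-monotonicity parameter $\delta(\eta^2)$ supplied by the hypothesis on $\Vert S(f_0) \Vert$; the rest is routine bookkeeping with Urysohn and standard lattice estimates.
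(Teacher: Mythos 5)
Your proof is correct and follows essentially the same route as the paper: the key step (e) is the identical argument via the pointwise bound $|f_0|+\tfrac{\eta}{2}|h|\le 1$, positivity of $S$, and uniform monotonicity with parameter $\eta^2$. The only (cosmetic) difference is that you obtain $g_1,g_2$ from Urysohn's lemma, whereas the paper writes them down explicitly as piecewise-linear functions of $f_0$; both constructions yield the required properties.
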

\begin{proof}
	 We can define  the functions $g_1$ and $g_2$  as follows
	 $$
	 g_1(t) = 
	 \begin{cases} 
	 1 & \text{if } -1  \le  f_0(t) <   -1 + \frac{\eta}{2} \\
	 -\frac{2}{ \eta}  (f_0 (t) + 1 - \eta)     & \text{if }  -1 + \frac{\eta}{2}  \le  f_0(t) <  -1 + \eta\\
	 0       & \text{if }  -1 + \eta \le  f_0(t) \le  1 ,
	 \end{cases}
	 $$	 
	 $$
	 g_2(t) = 
	 \begin{cases} 
	 0       & \text{if }  -1 \le  f_0(t) \le  1- \eta \\
	 \frac{2}{ \eta}  (f_0 (t) - 1 + \eta)     & \text{if } 1- \eta <  f_0(t)  \le  1- \frac{\eta}{2}\\
	 1 & \text{if } 1- \frac{\eta}{2} <   f_0(t) \le  1.
	 \end{cases}
	 $$
It is immediate to check that $g_1$ and $g_2$ are positive functions in $B_{C_0(L)}$  satisfying  the conditions stated in a), b), c) and d). If   $h\in B_{C_0(L)}$  satisfies   $ h_{ \vert  A_1 \cup B_1}= 0 $
	we clearly have that  $\vert f_0\vert + \frac{\eta}{2} \vert h \vert \in S_{C_0(L)}$. 
	    By using that $S$ is a positive operator and the assumption  we have that
	\begin{align*}
	\Bigl\Vert S \Bigl( \vert f_0 \vert + \frac{\eta}{2} \vert h \vert \Bigr) \Bigr\Vert &  \le  1     \\
	&<  \Vert S (f_0) \Vert  \bigl( 1 + \delta (\eta ^2) \bigr)  \\
	&\le   \Vert S ( \vert f_0 \vert ) \Vert  \bigl( 1 + \delta (\eta ^2) \bigr) .
	\end{align*}
	
	By using the uniform monotonicity of $Y$ we obtain that 		$
	\Vert  S (\frac{\eta}{2} \vert h \vert) \Vert \le \eta ^2$ and so
	$$ \Vert  S ( h)  \Vert \le \Vert  S ( \vert h \vert)  \Vert \le 2 \eta.$$

	Finally notice that   $(g_1 + g_2  - 1)(A_1 \cup B_1)=\{0\} $  and $\Vert g_1 + g_2 - 1 \Vert _{\infty} \le  1$. So  from the previous part we conclude for every $f \in C_0(L), \sep \Vert S(fg_1 + fg_2-f) \Vert \le  2 \eta \Vert f \Vert _{\infty} .$
\end{proof}

	\newpage

\begin{theorem}
	\label{teo-BPBp-L-infty-UM-lattice}
	The pair $(C_0(L), Y) $ has the Bishop-Phelps--Bollob{\'a}s property for positive operators, for any locally compact Hausdorff topological space $L$, whenever  $Y$ is a uniformly monotone Banach function space. The function $\eta$  satisfying Definition \ref{def-BPBp-pos} depends only on the modulus of uniform monotonicity of $Y$.	
\end{theorem}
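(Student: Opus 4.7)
The strategy is to use Lemma~\ref{appro} to extract two positive cutoff functions $g_1, g_2\in B_{C_0(L)}^+$ with disjoint supports, localising $f_0$ near $-1$ and $+1$ respectively; use Lemma~\ref{le-dis-supp} to replace $S(g_1), S(g_2)\in Y^+$ by disjointly supported $h_1, h_2\in Y^+$ with $\|h_1+h_2\|=1$; and build a positive operator of the shape $T(f)=\tilde S_1(fg_1)+\tilde S_2(fg_2)$ for suitable positive modifications $\tilde S_1, \tilde S_2$ of $S$ sending $g_i\mapsto \lambda h_i$, where $\lambda$ is a scalar close to $1$. Proposition~\ref{comp} then gives $\|T\|=\lambda$, and a piecewise-linear clipping $u_0$ of $f_0$ engineered so that $u_0 g_1=-g_1$ and $u_0 g_2=g_2$ produces $T(u_0)=\lambda(h_2-h_1)$ of norm exactly $\lambda$; a final rescaling delivers the required norm-one data.

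\emph{Parameters and extraction.} Given $0<\varepsilon<1$, pick an auxiliary $\eta_0\in(0,\varepsilon/12)$ small enough that $1-5\eta_0>1/(1+\delta(\varepsilon/6))$, where $\delta$ is the modulus of uniform monotonicity of $Y$; define the BPBp-modulus of Definition~\ref{def-BPBp-pos} to be $\eta(\varepsilon):=\delta(\eta_0^2)$. For $S\in S_{L(C_0(L),Y)}$ positive and $f_0\in S_{C_0(L)}$ with $\|S(f_0)\|>1/(1+\delta(\eta_0^2))$, Lemma~\ref{appro} applied with parameter $\eta_0$ produces $g_1, g_2\in B_{C_0(L)}^+$ with disjoint supports. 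Combining (c), (d), (e) of Lemma~\ref{appro} with the hypothesis,
\[
\|S(g_2)-S(g_1)\|>1-5\eta_0, \qquad \|S(g_1)+S(g_2)\|=\|S(g_1+g_2)\|\leq 1.
\]
Setting $\lambda:=\|S(g_1)+S(g_2)\|\in(1-5\eta_0,1]$, Lemma~\ref{le-dis-supp} applied to the normalised pair $\lambda^{-1}S(g_1), \lambda^{-1}S(g_2)$ yields disjointly supported $h_1, h_2\in Y^+$ with $\|h_1+h_2\|=1$ and $\|h_i-\lambda^{-1}S(g_i)\|<\varepsilon/6$.

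\emph{Constructing $u_0$ and $T$.} Let $\psi:[-1,1]\to[-1,1]$ be continuous with $\psi\equiv -1$ on $[-1,-1+\eta_0]$, $\psi\equiv +1$ on $[1-\eta_0,1]$, and linear on $[-1+\eta_0,1-\eta_0]$; put $u_0:=\psi\circ f_0$. Then $u_0\in S_{C_0(L)}$, $\|u_0-f_0\|_\infty\leq\eta_0<\varepsilon$, $u_0 g_1=-g_1$, and $u_0 g_2=g_2$. Produce positive operators $\tilde S_i$ on $C_0(L)$ with $\tilde S_i(g_i)=\lambda h_i$ and with operator-norm distance from $S$ at most $\varepsilon/6$ when restricted to functions supported in $\supp(g_i)$; define $T(f):=\tilde S_1(fg_1)+\tilde S_2(fg_2)$. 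By Proposition~\ref{comp}, $\|T\|=\|\lambda h_1+\lambda h_2\|=\lambda$, and direct computation yields $T(u_0)=\lambda(h_2-h_1)$, whose norm is $\lambda$ since $h_1, h_2$ are disjointly supported. Setting $T':=T/\lambda$ gives $\|T'\|=1=\|T'(u_0)\|$; the triangle bound
\[
\|T-S\|\leq \|(\tilde S_1-S)(\cdot g_1)\|+\|(\tilde S_2-S)(\cdot g_2)\|+\|S(\cdot(1-g_1-g_2))\|\leq \tfrac{\varepsilon}{6}+\tfrac{\varepsilon}{6}+2\eta_0
\]
(using Lemma~\ref{appro}(e) for the last term) ensures $\|T'-S\|<\varepsilon$ for $\eta_0$ small.

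\emph{Main obstacle.} The only nontrivial step is the construction of the positive operators $\tilde S_i$. The natural rank-one ansatz $\tilde S_i(h):=S(h)+\gamma_i(h)(\lambda h_i-S(g_i))$ with a positive norm-one functional $\gamma_i\in C_0(L)^*$ satisfying $\gamma_i(g_i)=1$ sends $g_i$ to $\lambda h_i$ and has operator-norm distance $\|\lambda h_i-S(g_i)\|<\varepsilon/6$ from $S$; but the element $\lambda h_i-S(g_i)$ may have a nonzero negative part (Lemma~\ref{le-dis-supp} only controls it in norm), which threatens positivity of $\tilde S_i$. Resolving this---either by choosing $\gamma_i$ as a measure on $\{g_i=1\}$ weighted so that the disjoint-support structure of $h_1, h_2$ ensures the negative perturbation is dominated by $S(h)$ for every positive $h$, or by replacing the rank-one piece with a finer positive correction that exploits the strict disjointness of the supports of $h_1, h_2$---is the only place where the full strength of uniform monotonicity (as encoded in Lemma~\ref{le-dis-supp}) is used; once this is in hand, the remaining verification is routine.
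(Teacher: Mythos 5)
Your overall architecture (Lemma~\ref{appro} to produce the cutoffs $g_1,g_2$, Lemma~\ref{le-dis-supp} to produce disjointly supported $h_1,h_2$, a clipped $u_0$ with $u_0g_1=-g_1$ and $u_0g_2=g_2$, Proposition~\ref{comp} to compute the norm of the modified operator) matches the paper's. But the proof is not complete: the step you yourself flag as the ``main obstacle'' --- producing \emph{positive} operators $\tilde S_i$ with $\tilde S_i(g_i)=\lambda h_i$ and small distance to $S$ --- is exactly the crux, and neither of the fixes you sketch works. The rank-one ansatz $\tilde S_i(h)=S(h)+\gamma_i(h)(\lambda h_i-S(g_i))$ fails because $\lambda h_i-S(g_i)$ can have a nontrivial negative part, and there is no reason why $S(h)$ should dominate $\gamma_i(h)\,(\lambda h_i-S(g_i))^-$ for every positive $h$: one can have $h\ge 0$ with $\gamma_i(h)$ of order $\Vert h\Vert$ while $S(h)$ fails to dominate any fixed nonzero positive element of $Y$. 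Concentrating $\gamma_i$ on $\{g_i=1\}$ does not change this. So as written the proof has a genuine gap at its central step.

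The paper's resolution is to abandon the goal of hitting $h_i$ exactly. Instead it post-composes with the band projections of the Banach function space $Y$: it sets
$$
U(f)= S(f g_1)\,\chi _{\supp h_1} +  S(f g_2)\,\chi _{\supp h_2},
$$
which is automatically positive (a composition of positive maps), and is close to $S$ because $\Vert S(g_i)\chi_{\Omega\setminus\supp h_i}\Vert\le\Vert h_i-S(g_i)\Vert<\varepsilon/6$ (the $h_i$ are only used to locate the two disjoint bands, not as values of the new operator). Norm attainment at the clipped function then follows from Proposition~\ref{comp} together with the fact that the two summands have disjoint supports \emph{in $Y$}, so that $\Vert a-b\Vert=\Vert a+b\Vert$ pointwise; this is why the theorem is stated for Banach \emph{function} spaces (the lattice case with a weak unit is deduced afterwards via order isometry). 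To repair your argument you would need to replace the $\tilde S_i$ construction by this band-projection device (or an equivalent positive correction); without it the positivity of $T$ is unestablished and the proof does not close.
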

\begin{proof}
	Assume that $Y$ is a Banach function  space  on a measure space $(\Omega , \mu).$
	Let $ 0 < \varepsilon < 1$	 and $\delta $ be the function satisfying  the definition of uniform monotonicity for the Banach function  space $Y.$
	Choose a  real number  $\eta$ such that $0 < \eta = \eta (\varepsilon) < \frac{\varepsilon}{12}$  and  satisfying also
	$$
	\frac{1}{ 1 + \delta (\frac{\varepsilon}{ 18})} < \frac{1}{ 1 + \delta (\eta ^2)}  - 4 \eta.
$$

	Assume that $f_0 \in S_{C_0(L)} , S\in  S_{ L( C_0(L), Y)}	$ and  $S$ is a positive operator such  that 
	$$
	\Vert S(f_0) \Vert > \frac{1}{ 1 + \delta (\eta ^2)} .
	$$
Hence  we can apply Lemma \ref{appro} and so  there are positive functions $g_1$ and $g_2$ in $B_{ C_0(L)}$ satisfying all the conditions stated  in Lemma \ref{appro}, therefore 
	\begin{align}
	\label{SA-SB}
	\Vert S(g_1) - S(g_2 ) \Vert &  \ge  \Vert S( f_0 g_1) + S( f_0 g_2 ) \Vert  - \Vert   S(g_1)  + S(f_0 g_1) \Vert - \Vert S( f_0 g_2) -S(g_2 ) \Vert 
	\nonumber \\
	&\ge  \Vert S(f_0)\Vert - \Vert S( f_0 g_1+ f_0 g_2  - f_0) \Vert   - 2 \eta
	\\
	&\ge    \frac{1}{  1+ \delta (\eta ^2)}  - 4 \eta 
	\nonumber\\
	& >   \frac{1}{ 1+ \delta ( \frac{\varepsilon}{18})}.
	\nonumber
	\end{align}

	Since $S $ is a positive operator and   $\Vert S ( g_1) +  S(g_2) \Vert \le 1$, in view of \eqref{SA-SB} we can apply Lemma \ref{le-dis-supp}. Hence there are  two positive functions  $h_1$ and $h_2$ in $Y$  satisfying the following conditions
	\begin{equation}
	\label{h1-SA-h2-SB}
	\Vert h_1- S(g_1) \Vert  <   \frac{\varepsilon}{6},  \sem   \Vert h_2- S(g_2) \Vert   <    \frac{\varepsilon}{6} ,
	\end{equation}
	
	\begin{equation}
	\label{h-dis-sup-norm-one}
	\supp h_1 \cap  \supp h_2 = \varnothing  \sem \text{and} \sem \Vert h_1+h_2 \Vert  =1.
	\end{equation}
	
	Hence
	\begin{align}
	\label{S-A-out-sup-h1}	
	\Vert  S (g_1) \chi _{ \Omega  \backslash \supp h_1} \Vert  &  =  \Vert \bigl  (h_1 - S(g_1) \bigr)   \chi _{ \Omega  \backslash \supp h_1} \Vert 	  
	\nonumber  \\
	&\le     \Vert h_1 -  S (g_1)  \Vert  \\
	& <  \frac{ \varepsilon}{6} \sem \text{\rm (by \eqref{h1-SA-h2-SB})}
	\nonumber
	\end{align}
	and 
	\begin{align}
	\label{S-B-out-sup-h2}	
	\Vert  S (g_2) \chi _{ \Omega  \backslash \supp h_2} \Vert  &  =  \Vert   \bigl( h_2 - S(g_2) \bigr)   \chi _{ \Omega  \backslash \supp h_2} \Vert 	  
	\nonumber  \\
	&\le     \Vert h_2 -  S (g_2)  \Vert  \\
	& <  \frac{ \varepsilon}{6} \sem \text{\rm (by \eqref{h1-SA-h2-SB})}.
	\nonumber
	\end{align}

	Now we define the operator $U: C_0(L) \llll Y$ as follows
	$$
	U(f)= S(f g_1) \chi _{\supp h_1} +  S(f g_2) \chi _{\supp h_2} \seg (f \in C_0(L)).
	$$
	Since  $Y$ is a Banach  function space  and $S \in  L(C_0(L) ,Y),$ $U$  is well defined  and belongs to $ L( C_0(L) ,Y).$ The operator  $U$ is positive since $g_1$ and $g_2$ are positive elements in $C_0(L)$ and $S$ is a positive operator.  
	For any $f \in B_{ C_0(L)}$ we have that 
	\begin{align}
	\label{U-S}
	\Vert (U -  S)(f)  \Vert  & =   \Vert S(f g_1) \chi _{  \supp h_1} +  S(f g_2) \chi _{  \supp h_2}    - S(f) \Vert  
	\nonumber\\
	&  =    \Vert S(f g_1) \chi _{  \supp h_1} +S(f g_2) \chi_{  \supp h_2}   - S(f g_1 + f g_2 ) + S(f g_1 + f g_2  - f) \Vert \\
	&  \le  \Vert  S(g_1) \chi _{ \Omega  \backslash \supp h_1} \Vert  +  \Vert  S( g_2) \chi _{\Omega  \backslash  \supp h_2}   ) \Vert   +   \Vert S(f g_1 + f g_2  - f) \Vert \nonumber\\
	& < \frac{\varepsilon}{3} +   2 \eta  < \frac{\varepsilon}{2} \sem \text{(by   \eqref{S-A-out-sup-h1},  \eqref{S-B-out-sup-h2} and item e)  of Lemma  \eqref{appro})}.  
	\nonumber
	\end{align}
	Hence
	\begin{equation}
	\label{U-1}
	\vert \Vert U \Vert - 1 \vert < \dfrac{\varepsilon}{2},	
	\end{equation}
	so $U \ne 0$.	 
	
	Finally we define $T = \frac{U}{ \Vert U \Vert }$.   Since $U$ is a positive operator, $T$ is also positive. Of course
	\linebreak[4]
	$T \in S_{ L(C_0(L), Y)}$  and also satisfies
	\begin{align}
	\label{T-S}
	\Vert T - S \Vert & \le  \Vert T - U \Vert + \Vert U - S \Vert  
	\nonumber   \\
	& < \Bigl \Vert  \frac{U} { \Vert U \Vert } - U \Bigr \Vert + \frac{\varepsilon}{2} \sem \text{(by \eqref{U-S})} \\
	& = \bigl \vert 1 - \Vert U \Vert \bigr \vert + \frac{\varepsilon}{2} 
	\nonumber \\
	& < \varepsilon \sem \text{(by \eqref{U-1})}.
	\nonumber 
	\end{align}
	
	The  function $f_1$ given by 
	$$
	f_1(t) = 
	\begin{cases} 
	1 & \text{if } 1- \eta <  f_0(t) \le  1 \\
	\frac{f_0(t)}{1- \eta}       & \text{if } \vert f_0 (t)\vert \le  1- \eta\\
	-1       & \text{if } -1\le  f_0(t) <  -1+ \eta 
	\end{cases}
	$$
	 belongs  to $S_{C_0(L)}$ and satisfies that
	\begin{equation}
	\label{f1-f0}
	\Vert f_1 - f_0 \Vert _\infty  \le \eta  < \varepsilon.
	\end{equation}
	We clearly have that
	$$
	U(f_1) =   S(g_2) \chi _{ \sup h_2} -  S (g_1) \chi _{ \sup h_1} .
	$$
	Since $S\ge 0$  and  $g_1$ and $g_2$ are positive functions, in view of Proposition \ref{comp} we have that 
	$$
	\Vert U \Vert =  \Vert  S(g_1) \chi _{\supp h_1} +  S( g_2) \chi _{\supp h_2} \Vert.
	$$
	Since $h_1$ and $h_2$ have disjoint supports, for  each $x \in \Omega $ we obtain that
	\begin{align*}
	\bigl\vert \bigl ( S ( g_1) \chi _ {\sup h_1} +   S ( g_2) \chi _ {\sup h_2}   \bigr) (x) \bigr\vert 
  & = \bigl\vert \bigl (- S ( g_1) \chi _ {\sup h_1} +   S ( g_2) \chi _ {\sup h_2}   \bigr) (x) \bigr \vert     \\ 
	& =   \bigl\vert \bigl ( U( f_1 )  \bigr) (x) \bigr \vert .
	\end{align*}

	Since $Y$ is a Banach  function  space we conclude that 
	$$
	\Vert U \Vert =  \Vert  S(g_1) \chi _{\supp h_1} +  S( g_2) \chi _{\supp h_2} \Vert = \Vert  S(g_1) \chi _{\supp h_1} -  S( g_2) \chi _{\supp h_2} \Vert
	= \Vert U (f_1) \Vert.
	$$
	
	By \eqref{f1-f0} and    \eqref{T-S}, since $T$ attains its norm at $f_1$, the proof is finished
\end{proof}

\newpage

Now we will improve the statement in Theorem \ref{teo-BPBp-L-infty-UM-lattice}.  For that purpose  we  use operators in an  operator ideal, a notion that we recall below.

\begin{definition}
	[{\cite[Definition 9.1]{DF}}]
	\label{def-ideal}
	An  \textit{operator ideal $\mathcal{A}$} is a subclass of the class  $L$ of all continuous linear operators between Banach spaces such that for all Banach spaces $X$ and $Y$ its components 
		$$
		\mathcal{A}(X, Y) = L(X, Y) \cap \mathcal{A} 
		$$
		satisfy 

\noindent	
(1) $\mathcal{A}(X, Y)$ is a linear subspace of $L(X, Y)$ which contains the finite rank operators. 

\noindent		
(2) The ideal property: If $S \in\mathcal{A}(X_0, Y_0) ,$ $R \in L(X, X_0)$ and $T \in L(Y_0, Y),$ then the composition $TSR$ is in $ \mathcal{A}(X, Y).$
\end{definition}

We used in the previous proof that for a Banach function space $Y$ on a measure space $(\Omega, \mu)$, for any measurable set $A\subset \Omega,$ the operator $h\mapsto h \chi_A$ is linear and bounded.    So in case that $\mathcal{I} $ is some operator ideal  and the operator $S$  in the proof of Theorem \ref{teo-BPBp-L-infty-UM-lattice} belongs to  $\mathcal{I} (C_0(L),Y),$  then the operator $U$ also belongs to $\mathcal{I} (C_0(L),Y).$ Hence we  obtain the following result.

\begin{corollary}
Under the assumptions of Theorem \ref{teo-BPBp-L-infty-UM-lattice}, 	 if $\mathcal{I}$ is  some operator ideal, then the ideal $\mathcal{I}(C_0(L), Y)$ has the Bishop-Phelps-Bollobás property for positive operators.
\end{corollary}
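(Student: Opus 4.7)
The plan is to reuse the proof of Theorem \ref{teo-BPBp-L-infty-UM-lattice} verbatim, with the same $\eta(\varepsilon)$, the same auxiliary functions $g_1, g_2 \in B_{C_0(L)}$ produced by Lemma \ref{appro}, the same disjointly supported $h_1, h_2 \in Y$ produced by Lemma \ref{le-dis-supp}, and the same approximating element $f_1 \in S_{C_0(L)}$. Since all the norm estimates ($\Vert T - S \Vert < \varepsilon$, $\Vert f_1 - f_0 \Vert_\infty < \varepsilon$, $\Vert T(f_1) \Vert = 1$, positivity of $T$) are inherited without change, the only extra task is to verify that the final operator $T = U / \Vert U \Vert$ actually lies in $\mathcal{I}(C_0(L),Y)$ whenever the starting $S$ does, so that Definition \ref{def-BPBp-pos-subs} applied to the subspace $M = \mathcal{I}(C_0(L), Y)$ is satisfied.

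The key step, which the paragraph preceding the corollary already hints at, is to factor the operator
$$
U(f)= S(f g_1) \chi _{\supp h_1} +  S(f g_2) \chi _{\supp h_2}.
$$
I would introduce the multiplication operators $M_{g_i} \colon C_0(L) \llll C_0(L)$ defined by $M_{g_i}(f) = f g_i$, which are bounded and linear because $g_i \in B_{C_0(L)}$, together with the projection operators $P_i \colon Y \llll Y$ defined by $P_i(y) = y \chi_{\supp h_i}$, which are bounded and linear because $Y$ is a Banach function space (the lattice norm gives $\Vert y \chi_A \Vert \le \Vert y \Vert$ for any measurable $A \subset \Omega$). With these factorisations one has
$$
U \;=\; P_1 \circ S \circ M_{g_1} \;+\; P_2 \circ S \circ M_{g_2}.
$$

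The ideal property in Definition \ref{def-ideal}(2), applied with $R = M_{g_i}$, the middle operator $S \in \mathcal{I}(C_0(L),Y)$, and $T = P_i$, places each summand $P_i \circ S \circ M_{g_i}$ in $\mathcal{I}(C_0(L),Y)$. Since $\mathcal{I}(C_0(L),Y)$ is a linear subspace of $L(C_0(L),Y)$ by Definition \ref{def-ideal}(1), the sum $U$ and hence the normalisation $T = U / \Vert U \Vert$ belong to $\mathcal{I}(C_0(L),Y)$. I do not expect a genuine obstacle here: the whole point is that the construction in Theorem \ref{teo-BPBp-L-infty-UM-lattice} only modifies $S$ by pre- and post-composing with bounded multipliers and taking a finite sum, an operation that every operator ideal is stable under by definition.
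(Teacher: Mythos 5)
Your proposal is correct and takes essentially the same route as the paper: the paper's justification is precisely the remark preceding the corollary, namely that $U$ is obtained from $S$ by pre-composing with multiplication by $g_i$ and post-composing with the bounded restriction maps $h\mapsto h\chi_{\supp h_i}$, so the ideal property keeps $U$ (hence $T=U/\Vert U\Vert$) in $\mathcal{I}(C_0(L),Y)$. You have merely written out the factorisation $U=P_1\circ S\circ M_{g_1}+P_2\circ S\circ M_{g_2}$ explicitly, which the paper leaves implicit.
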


Theorem 	\ref{teo-BPBp-L-infty-UM-lattice}
	is a far reaching extension of \cite[Theorems 2.5 and 3.2]{ASposBull}, where the same result was proved  in case that the domain space is a $L_{\infty} $ or $c_0$ space.
Our purpose now is to  obtain  a version of  Theorem  \ref{teo-BPBp-L-infty-UM-lattice}  for some abstract Banach lattices.
In order to get this result, we use that every uniformly monotone Banach lattice is order continuous (see \cite[Theorem 21, p. 371]{Bi}  and \cite[Proposition 1.a.8]{LiTz}).  Also any order continuous Banach lattice with a weak unit is order isometric to a Banach function space (see \cite[Theorem 1.b.14]{LiTz}). From Theorem \ref{teo-BPBp-L-infty-UM-lattice} and the previous argument we deduce the following result.

\begin{corollary}
	\label{cor-BPBp-pos-C0L-UM-lattice}
	The pair $(C_0(L), Y) $ has the Bishop-Phelps-Bollob{\'a}s property for positive
	\linebreak[4]
	operators, for any locally compact Hausdorff topological  space $L$ whenever  $Y$ is a uniformly monotone Banach lattice with a weak  unit. Moreover,  the function $\eta$  satisfying Definition \ref{def-BPBp-pos} depends only on the modulus of uniform monotonicity of $Y$.
\end{corollary}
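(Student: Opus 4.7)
The plan is to reduce the corollary to Theorem \ref{teo-BPBp-L-infty-UM-lattice} by representing $Y$ as a Banach function space. The hypothesis gives that $Y$ is a uniformly monotone Banach lattice with a weak unit; this is essentially all we need to feed into the representation theorems cited immediately before the corollary statement.

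First, I invoke that every uniformly monotone Banach lattice is order continuous, as stated in \cite[Theorem 21, p.~371]{Bi} and \cite[Proposition 1.a.8]{LiTz}. Combined with the existence of a weak unit, the representation theorem \cite[Theorem 1.b.14]{LiTz} gives a measure space $(\Omega,\mu)$ and a lattice isometry $\Phi\colon Y\longrightarrow \tilde Y$, where $\tilde Y$ is a Banach function space on $(\Omega,\mu)$. The key observation is that $\Phi$ preserves the order structure and the norm, so $\tilde Y$ is itself uniformly monotone with exactly the same modulus $\delta$ as $Y$.

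Now I apply Theorem \ref{teo-BPBp-L-infty-UM-lattice} to the pair $(C_0(L),\tilde Y)$, obtaining a function $\eta=\eta(\varepsilon)$ depending only on $\delta$. Given $S\in S_{L(C_0(L),Y)}$ positive and $f_0\in S_{C_0(L)}$ with $\|S(f_0)\|>1-\eta(\varepsilon)$, I transport $S$ to $\tilde S := \Phi\circ S\in S_{L(C_0(L),\tilde Y)}$. Since $\Phi$ is a positive lattice isometry, $\tilde S$ is positive and $\|\tilde S(f_0)\|=\|S(f_0)\|>1-\eta(\varepsilon)$. Theorem \ref{teo-BPBp-L-infty-UM-lattice} then produces $u_0\in S_{C_0(L)}$ and a positive operator $\tilde T\in S_{L(C_0(L),\tilde Y)}$ satisfying
\[
\|\tilde T(u_0)\|=1,\qquad \|u_0-f_0\|<\varepsilon,\qquad \|\tilde T-\tilde S\|<\varepsilon.
\]
Pulling back with $T:=\Phi^{-1}\circ\tilde T$ yields a positive operator $T\in S_{L(C_0(L),Y)}$ with $\|T(u_0)\|=1$ and $\|T-S\|<\varepsilon$, which is exactly Definition \ref{def-BPBp-pos}. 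Since $\eta$ in Theorem \ref{teo-BPBp-L-infty-UM-lattice} depends only on the modulus of uniform monotonicity, and this modulus is preserved by $\Phi$, the same dependence holds for $Y$.

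I do not expect a genuine obstacle here; the whole step sits in checking that the isometry $\Phi$ from \cite[Theorem 1.b.14]{LiTz} is simultaneously an order isomorphism and an isometry, so that positivity of operators and the modulus of uniform monotonicity both transfer between $Y$ and $\tilde Y$. Both facts are built into the statement of that representation theorem, so the argument is essentially a transportation of structure combined with the already proved function-space case.
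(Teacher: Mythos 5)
Your proposal is correct and follows essentially the same route as the paper: the authors also invoke that uniform monotonicity implies order continuity, use \cite[Theorem 1.b.14]{LiTz} to obtain an order isometry onto a Banach function space, and then transfer the conclusion from Theorem \ref{teo-BPBp-L-infty-UM-lattice}. You merely make explicit the transport of the operators and of the modulus of uniform monotonicity, which the paper leaves implicit.
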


 By using the previous result and the  same argument of   \cite[Corollary 3.3]{ASposBull} we obtain the next statement:

\begin{corollary}
	\label{cor-BPBp-pos-C0L-sep-UM-lattice}
 For any locally compact Hausdorff topological  space $L$ such that $C_0(L)$ is separable, The pair $(C_0(L), Y) $ has the Bishop-Phelps-Bollob{\'a}s property for positive operators whenever  $Y$ is a uniformly monotone Banach lattice. Moreover,  the function $\eta$  satisfying Definition \ref{def-BPBp-pos} depends only on the modulus of uniform monotonicity of $Y$.
\end{corollary}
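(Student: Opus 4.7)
The plan is to reduce the statement to Corollary \ref{cor-BPBp-pos-C0L-UM-lattice} by a separability argument: given a positive $S \in S_{L(C_0(L),Y)}$, I will pass to a separable closed sublattice $Y_0 \subseteq Y$ that contains the range of $S$ and carries a weak unit, apply the previous corollary inside $Y_0$, and transport the resulting pair $(T,u_0)$ back into $L(C_0(L),Y)$ via the isometric lattice embedding $Y_0 \hookrightarrow Y$. As $\eta(\varepsilon)$ I take the function supplied by Corollary \ref{cor-BPBp-pos-C0L-UM-lattice}; since that function depends only on the modulus of uniform monotonicity $\delta$ of $Y$, and since $\delta$ is inherited by every closed sublattice, a single choice of $\eta(\varepsilon)$ serves for every auxiliary $Y_0$ that arises.

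For the construction, fix a countable dense subset $(f_n)_{n\in\N}$ of $C_0(L)$ and let $Y_0$ be the closed sublattice of $Y$ generated by $\{S(f_n):n\in\N\}$. Rational linear combinations together with finitely many lattice operations applied to $\{S(f_n)\}$ give a countable set dense in $Y_0$, so $Y_0$ is separable; by density of $(f_n)$ and continuity of $S$, $Y_0$ contains the entire range $S(C_0(L))$. Uniform monotonicity of $Y_0$ with the same modulus $\delta$ is immediate, since the defining inequality in Definition \ref{def-SM-UM} only involves norms and order relations of elements of $Y_0$, all inherited from $Y$. A weak unit is produced by the standard trick: take a dense sequence $(y_n)$ in $Y_0^+\setminus\{0\}$ and set
$$
e := \sum_{n=1}^{\infty} \frac{y_n}{2^n \|y_n\|},
$$
which converges absolutely in $Y_0^+$. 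From $0\le y_n/(2^n\|y_n\|)\le e$ one gets that $|x|\wedge e=0$ forces $|x|\wedge y_n=0$ for every $n$; by continuity of the lattice operations this extends to $|x|\wedge y=0$ for all $y\in Y_0^+$, and in particular $x=0$.

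With $Y_0$ in place, $S$ regarded as a positive operator $C_0(L)\to Y_0$ still has norm one and satisfies $\|S(f_0)\|_{Y_0}=\|S(f_0)\|_Y>1-\eta(\varepsilon)$. Applying Corollary \ref{cor-BPBp-pos-C0L-UM-lattice} inside $Y_0$ yields $u_0\in S_{C_0(L)}$ and a positive $T\in S_{L(C_0(L),Y_0)}$ with $\|T(u_0)\|=1$, $\|u_0-f_0\|<\varepsilon$ and $\|T-S\|<\varepsilon$. Composing $T$ with the isometric positive embedding $Y_0\hookrightarrow Y$ preserves all three conditions, and produces the desired operator in $L(C_0(L),Y)$. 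I do not expect a genuine obstacle: the only point that requires any care is verifying that $Y_0$ admits a weak unit, and everything else is routine separability and restriction bookkeeping that sits on top of Corollary \ref{cor-BPBp-pos-C0L-UM-lattice}.
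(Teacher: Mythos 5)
Your argument is correct and is essentially the proof the paper intends: the paper simply invokes ``the same argument of \cite[Corollary 3.3]{ASposBull}'', which is exactly this reduction --- pass to the separable closed sublattice $Y_0$ generated by the image of a countable dense subset of $C_0(L)$, note that $Y_0$ inherits the modulus of uniform monotonicity and (being separable) admits a weak unit, apply Corollary \ref{cor-BPBp-pos-C0L-UM-lattice} to $(C_0(L),Y_0)$, and transport back along the isometric lattice embedding $Y_0\hookrightarrow Y$. All the details you supply (countable lattice-algebraic generation for separability, the series construction of the weak unit, and the independence of $\eta$ from $Y_0$) are sound.
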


Our intention now is to  provide a class of  Banach lattices  for which the previous result is in fact a characterization.   The proof of the next result  is a refinement of the  arguments used in \cite[Proposition 4.3]{ASposBull}, where it is assumed that  the domain has a non-trivial $M$-summand.

\begin{proposition}
	\label{prop-optimal}
	Let  $Y$ be a   strictly monotone Banach lattice and $L$ a locally compact Hausdorff topological space that contains at least two elements.  If the pair $(C_0(L),Y)$ has the BPBp for positive operators  then $Y$ is uniformly monotone.
\end{proposition}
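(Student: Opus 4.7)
The plan is to argue the contrapositive: assume that $Y$ is strictly monotone but \emph{not} uniformly monotone, and exhibit a failure of the Bishop-Phelps-Bollob{\'a}s property for positive operators for $(C_0(L), Y)$. By item~3) of Proposition~\ref{pro-char-UM}, the failure of uniform monotonicity yields some $\varepsilon_0 \in \bigl(0, \tfrac{1}{2}\bigr)$ (replacing it by a smaller value if necessary) such that for every $\eta > 0$ there exist $u, v \in Y^+$ with $0 \le u \le v$, $\Vert v \Vert = 1$, $\Vert v - u \Vert > 1 - \eta$ and $\Vert u \Vert > \varepsilon_0$. I shall fix $\varepsilon := \varepsilon_0 / 4$ and show that no modulus $\eta(\varepsilon) > 0$ can witness the BPBp at this $\varepsilon$.

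Choose two distinct points $t_1, t_2 \in L$, and use the Urysohn lemma for locally compact Hausdorff spaces to fix $\phi \in C_0(L)^+$ with $\phi(t_1) = 1$, $\phi(t_2) = 0$ and $0 \le \phi \le 1$. Given a candidate $\eta > 0$, extract witnesses $u, v$ as above and define
$$
S \colon C_0(L) \llll Y, \seg S(f) = f(t_1)(v - u) + f(t_2) u .
$$
Then $S$ is positive, and $\Vert S \Vert = 1$: the estimate $\vert S(f) \vert \le \vert f(t_1) \vert (v - u) + \vert f(t_2) \vert u \le v$ for $f \in B_{C_0(L)}$ gives $\Vert S \Vert \le 1$, while another use of Urysohn to pick $g \in B_{C_0(L)}$ with $g(t_1) = g(t_2) = 1$ yields $\Vert S(g) \Vert = \Vert v \Vert = 1$. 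Setting $f_0 := \phi \in S_{C_0(L)}$, we have $S(f_0) = v - u$, so $\Vert S(f_0) \Vert > 1 - \eta$.

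Assume for contradiction that $(C_0(L), Y)$ has the BPBp, and take $\eta < \eta(\varepsilon)$ to extract $u_0 \in S_{C_0(L)}$ and a positive $T \in S_{L(C_0(L), Y)}$ with $\Vert T(u_0) \Vert = 1$, $\Vert u_0 - f_0 \Vert_\infty < \varepsilon$ and $\Vert T - S \Vert < \varepsilon$. Since $\vert T(u_0) \vert \le T(\vert u_0 \vert)$ and $f_0 \ge 0$, replacing $u_0$ by $\vert u_0 \vert$ I may assume $u_0 \ge 0$; in particular $u_0(t_2) < \varepsilon$. Using the continuity of $u_0$ and the Hausdorff property of $L$, pick an open neighbourhood $W$ of $t_2$ with $t_1 \notin W$ on which $u_0 < 2\varepsilon$, and apply Urysohn once more to obtain $g_2 \in C_0(L)^+$ with $g_2(t_2) = 1$, $0 \le g_2 \le 1$ and $\supp g_2 \subset W$. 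Define $h := u_0 + (1 - 2\varepsilon) g_2$; then $0 \le h \le 1$, because on $\supp g_2 \subset W$ one has $h < 2\varepsilon + (1 - 2\varepsilon) = 1$, and off $\supp g_2$ one has $h = u_0 \le 1$.

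The decisive step combines positivity of $T$ with strict monotonicity of $Y$. Since $T \ge 0$, $0 \le T(u_0) \le T(h)$, hence $1 = \Vert T(u_0) \Vert \le \Vert T(h) \Vert \le \Vert T \Vert \Vert h \Vert_\infty = 1$; strict monotonicity of $Y$ then forces $T(u_0) = T(h)$, i.e., $(1 - 2\varepsilon) T(g_2) = 0$, and so $T(g_2) = 0$. On the other hand, $t_1 \notin \supp g_2$ yields $S(g_2) = g_2(t_1)(v - u) + g_2(t_2) u = u$, whence
$$
0 = \Vert T(g_2) \Vert \ge \Vert S(g_2) \Vert - \Vert T - S \Vert > \Vert u \Vert - \varepsilon > \varepsilon_0 - \tfrac{\varepsilon_0}{4} = \tfrac{3 \varepsilon_0}{4} ,
$$
the desired contradiction. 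The main obstacle is engineering $h$ with $\Vert h \Vert_\infty \le 1$ despite the positive perturbation $(1 - 2\varepsilon) g_2$; this is made possible by localizing $g_2$ to a neighbourhood of $t_2$ on which $u_0$ is \emph{already} small, a smallness forced by $f_0(t_2) = 0$. It is precisely this geometric setup that converts strict monotonicity of $Y$ into the required contradiction.
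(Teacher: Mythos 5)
Your argument is correct and follows essentially the same route as the paper's proof: the same rank-two positive operator $S(f)=f(t_1)(v-u)+f(t_2)u$, the same use of strict monotonicity on a norm-one sandwich $T(u_0)\le T(h)$ to force $T$ to annihilate a bump function at $t_2$, and the same final estimate $\Vert u\Vert \le \Vert S-T\Vert$. The only cosmetic differences are that you argue by contraposition rather than verifying condition 2) of Proposition \ref{pro-char-UM} directly, and that you build the second bump $g_2$ a posteriori on a neighbourhood where $u_0$ is small, instead of fixing two disjointly supported bumps at the outset as the paper does.
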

\begin{proof}
It suffices to  show that $Y$ satisfies condition 2) in Proposition \ref{pro-char-UM} in case that the pair $(C_0(L),Y)$ has the BPBp for positive operators.
	
	Let   $ 0< \varepsilon < 1$. Let  us take elements  $u$ and $v$ in $Y$  such that 
	$$ 
	0 \leq u \leq v,  \sem   \|v\| = 1  \sem \text{and}  \sem \|v - u\| > 1 - \eta (\varepsilon),
	$$
	where $\eta$ is the function satisfying the definition of BPBp for positive operators for the pair $(C_0(L),Y)$.

	Since $L$ contains at least two elements there are $t_i \in L$ and  $f_i \in S_{C_0(L)}$    for $i=1,2$ such that 
	$$
	0\le f_i \le 1, \sem f_i(t_i)=1, \sep i=1,2,  \sem \text{and} \sem \supp f_1 \cap \supp f_2 = \varnothing.
	$$

	Define   the operator $S$ from $C_0(L)$  to $Y$ given by
	$$ 
	S(f) =   f(t_1)  (v-u) + f(t_2) u,  \sem   (f \in  C_0(L)).
	$$
	Since $v - u$ and $u$ are positive elements  in $Y,$     $S$ is a   positive operator from $C_0(L)$ to $Y$.
	
	By using that  $ v-u$ and $u$  are positive elements in $Y,$ for any element $f \in B_{C_0(L)}$  we have that
	\begin{eqnarray*}
		\label{norm-S}
		\nonumber
		\Vert S(f) \Vert & \le &  \Vert \;  \vert f(t_1) \vert \; ( v-u)  +  \vert f(t_2)  \vert \; u \Vert  \\
		& \le  &  \Vert v \Vert =1. 
	\end{eqnarray*}
	Since  we also have that $f_1+f_2 \in S_{C_0(L)}$ and  $S(f_1+f_2)= v \in S_Y,$ we obtain  that  $S \in S_{ L(C_0(L),Y)}.$
	Notice also that $\|S (f_1) \| = \|v-u\|  > 1 - \eta (\varepsilon) $.  By using that  the pair  $(C_0(L),Y)$ has the BPBp for positive operators and \cite[Remark 2.2]{ASposBull},   there exist a positive operator $T \in S_{L(C_0(L),Y)}$ and  a positive element $g \in S_{C_0(L)}$ satisfying 
	$$
	\Vert  T (g) \Vert =1, \sem    	
	\Vert T-S\Vert   < \varepsilon  \sem \text{and} \sem   \Vert  g-f_1  \Vert  < \varepsilon. 
	$$
	Hence, if $t \in L \backslash \supp f_1$ we have that $\vert g(t)\vert < \varepsilon.$

	By using that $T$ is a positive and normalized operator  we get that 
	$$ 
	1 = \Vert T(g ) \Vert \le   \Vert T(g + (1- \varepsilon ) f_2 )  \Vert \le 1.
	$$
	Since  $Y$ is strictly monotone we obtain that  $T(f_2)=0.$
	As a consequence, 
	$$
	\| u\|  = \| S( f_2)\| =  \| (S-T) (f_2)  \| \le \| S-T\| < \varepsilon.
	$$
	In view of  Proposition \ref{pro-char-UM}  we  proved that $Y$ is uniformly monotone. 
\end{proof}

Taking into account the previous result and Corollary \ref{cor-BPBp-pos-C0L-UM-lattice} we obtain the following  characterization.

\begin{corollary}
	\label{cor-optimal-char}
		Let  $Y$ be a   strictly monotone Banach lattice and $L$ a locally compact Hausdorff topological space that contains at least two elements.  If the pair $(C_0(L),Y)$ has the BPBp for positive operators  then $Y$ is uniformly monotone. In case that $Y$ has a weak unit the converse is also true.
\end{corollary}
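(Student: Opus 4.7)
The plan is to observe that this corollary is just the synthesis of the two preceding results and contains no independent content, so the proof reduces to citing each direction.

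For the forward implication, I would invoke Proposition~\ref{prop-optimal} verbatim: under the hypotheses that $Y$ is strictly monotone and that $L$ contains at least two points, the BPBp for positive operators of the pair $(C_0(L),Y)$ forces $Y$ to be uniformly monotone. No weak unit assumption intervenes here, since Proposition~\ref{prop-optimal} extracts the modulus $\eta$ of uniform monotonicity directly from the BPBp modulus by testing the property on the explicit operator $S(f)=f(t_1)(v-u)+f(t_2)u$ and using strict monotonicity of $Y$ to conclude $T(f_2)=0$.

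For the converse, I would appeal to Corollary~\ref{cor-BPBp-pos-C0L-UM-lattice}: if $Y$ is uniformly monotone \emph{and} has a weak unit, then for every locally compact Hausdorff space $L$ the pair $(C_0(L),Y)$ has the BPBp for positive operators, with modulus depending only on the modulus of uniform monotonicity of $Y$. Strict monotonicity and the assumption that $L$ has at least two points play no role in this direction.

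There is no genuine obstacle; the work has already been done in the two cited statements. The only subtlety worth flagging is the asymmetric role of the weak unit hypothesis, which explains why the statement only claims a full equivalence under that extra assumption: the sufficiency direction passes through Theorem~\ref{teo-BPBp-L-infty-UM-lattice} via the representation of an order-continuous Banach lattice with weak unit as a Banach function space (the weak unit is needed to apply \cite[Theorem 1.b.14]{LiTz}), whereas Proposition~\ref{prop-optimal} produces uniform monotonicity without requiring any such representation.
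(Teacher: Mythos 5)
Your proposal is correct and matches the paper exactly: the authors derive this corollary by simply combining Proposition~\ref{prop-optimal} (forward direction) with Corollary~\ref{cor-BPBp-pos-C0L-UM-lattice} (converse under the weak unit hypothesis), which is precisely your argument. Your remark on the asymmetric role of the weak unit is accurate and consistent with how the paper routes the sufficiency direction through the Banach function space representation.
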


\bibliographystyle{amsalpha}

\begin{thebibliography}{99}
	\bibitem{Acbc}
	M.D. Acosta, \textit{On the  Bishop-Phelps-Bollob{\'a}s property},   Proceedings of the Conference Function Spaces XII, Banach Center Publications,  vol. 119, Marta  Kosek (Editor), Banach Center  Publications, Polish   Academy of Sciences, Institute of Mathematics, Warsaw, 2019,  13--32.
		%
	\bibitem{ABG-jmaa-2014}
	M.D. Acosta, J. Becerra-Guerrero, D. Garc\'{\i}a, S.K. Kim and M. Maestre, \textit{Bishop-Phelps-Bollob\'{a}s property for certain spaces of operators}, J.  Math. Anal. Appl. \textbf{414} 
	(2014),  532--545.
	%
	\bibitem{AAGM}
	M.D. Acosta, R.M. Aron, D. Garc\'{\i}a and M. Maestre, \textit{The Bishop-Phelps-Bollob\'{a}s
		theorem for operators}, J. Funct. Anal. \textbf{254} 
	(2008), 2780--2799.
	%
	\bibitem{ASposVM}
	M.D. Acosta and M. Soleimani-Mourchehkhorti, {\it   Bishop-Phelps-Bollob{\'a}s  property  for positive operators  between classical Banach spaces,}  The Mathematical Legacy of Victor Lomonosov: Operator Theory,  R.M. Aron, E.A.  Gallardo Guti\'{e}rrez, M. Mart{\'\i}n, D. Ryabogin, I.M. Spitkovsky  and A. Zvavitch  (Eds.), De Gruyter, 2020, pp. 1--13. 
	%
	\bibitem{ASposBull}
	M.D. Acosta and M. Soleimani-Mourchehkhorti, {\it   Bishop-Phelps-Bollob{\'a}s  property  for positive operators  when the domain is $L_{\infty}$,} Bull. Math. Sci. https://doi.org/10.1142/S166436072050023X.
	%
	\bibitem{Bi}
	G.~Birkhoff, \emph{Lattice theory}, American Mathematical Society Colloquium Publications
	\textbf{25},  American Mathematical Society, Providence, RI, 1973.
	%
	\bibitem{BP} 
	E. Bishop and R.R. Phelps, {\it A proof that every Banach space is subreflexive}, Bull. Amer. Math. Soc. \textbf{67} (1961), 97--98.
	%
	\bibitem{Bol}
	B. Bollob\'as, {\it An extension to the theorem of Bishop and Phelps}, Bull. London. Math.
	Soc. {\bf 2} (1970), 181--182.
	%
	\bibitem{BoDu}
	F.\,F.~Bonsall and J.~Duncan, {\it  Numerical ranges  II,} London
	Mathematical Society Lecture Notes Series, No. {\bf 10}, Cambridge
	University Press, New York-London, 1973.
	%
	\bibitem{CKMMR}
	M. Chica, V. Kadets, M. Mart{\'\i}n, S. Moreno-Pulido and F. Rambla-Barreno, {\it Bishop-Phelps-Bollob\'{a}s moduli of a Banach space}, J. Math.  Anal. Appl. {\bf 412}
	(2014), 
	697--719.
	%
	\bibitem{DF}
	A. Defant and K. Floret,  Tensor norms and operator ideals,
	North-Holland Math. Stud.  176,  North-Holland Publishing Co., Amsterdam, 1993.
	%
	\bibitem{LiTz}
	J. Lindenstrauss and L. Tzafriri,  \textit{Classical Banach spaces II.  Function spaces,}  \textbf{97},  Springer-Verlag, Berlin, 1979. 
	%
	 \end{thebibliography}

\end{document}